\documentclass[journal]{IEEEtran}

\usepackage[noadjust]{cite}

\usepackage[english]{babel}
\usepackage[final,pdftex]{graphicx}


\usepackage[cmex10]{amsmath}
\usepackage{amsfonts}
\usepackage{amssymb}
\usepackage{amsthm}
\usepackage{mathbbol}	
\usepackage{steinmetz}	

\usepackage[T1]{fontenc}

\newtheorem{theorem}{Theorem}
\newtheorem{corollary}[theorem]{Corollary}

\newtheorem*{remark}{Remark}

\newtheorem*{example}{Example}
\newtheorem{lemma}{Lemma}[section]

\usepackage{enumerate}
\usepackage{array}

\usepackage{url}

\newlength{\IEEECOLWIDTH}
\setlength{\IEEECOLWIDTH}{85mm}
\overfullrule=5pt

\usepackage{booktabs}
\newcommand{\ra}[1]{\renewcommand{\arraystretch}{#1}}

\def\TITLE{On the existence and linear approximation of the power flow solution in power distribution networks}
\def\TITLEnl{On the existence and linear approximation of the power flow solution in power distribution networks}
\def\AUTHORS{Saverio Bolognani and Sandro Zampieri}

\usepackage[hyperindex=true, %
  pdftitle={\TITLE},%
  pdfauthor={\AUTHORS},%
  colorlinks=true,%
  pagebackref=false,%
  plainpages=false,%
  pdfpagelabels,%
  linkcolor=NavyBlue,%
  citecolor=NavyBlue,%
  filecolor=NavyBlue,%
  urlcolor=NavyBlue%
]{hyperref} 

\usepackage[absolute,overlay]{textpos}
\usepackage[dvipsnames]{xcolor}


\def\vnodes{{\mathcal{V}}}
\def\lnodes{{\mathcal{L}}}
\def\pnodes{{\mathcal{Q}}}
\def\nonodes{{n}}
\def\node{{h}}
\def\nodealt{{k}}

\def\X{{Z}}

\def\C{\mathbb{C}}
\def\R{\mathbb{R}}
\def\1{\mathbb{1}}

\DeclareMathOperator{\diag}{diag}

\def\slackangle{{\theta_0}}

\DeclareMathOperator{\realpart}{Re}
\DeclareMathOperator{\imagpart}{Im}


\begin{document}

\title{\TITLEnl}

\author{\AUTHORS
\thanks{This work is supported by European
    Community 7th Framework Programme under grant agreement
    n. 257462 HYCON2 Network of Excellence.}%
\thanks{S.~Bolognani is with the Laboratory on Information and Decision Systems,
	Massachusetts Institute of Technology, Cambridge (MA), USA. Email: \texttt{\small saverio@mit.edu}.}%
\thanks{S.~Zampieri is with the Department of Information Engineering,
	University of Padova, Padova (PD), Italy. Email \texttt{\small zampi@dei.unipd.it}}%
\thanks{\textcopyright 2016 IEEE.  Personal use of this material is permitted.  Permission from IEEE must be obtained for all other uses, in any current or future media, including reprinting/republishing this material for advertising or promotional purposes, creating new collective works, for resale or redistribution to servers or lists, or reuse of any copyrighted component of this work in other works.}%
}

\maketitle

\begin{textblock*}{\textwidth}(15mm,50mm) 
\centering \bf \textcolor{NavyBlue}{Published on \emph{IEEE Transactions on Power Systems,} vol. 31, no. 1, pp. 163--172, Jan. 2016.\\\url{https://doi.org/10.1109/TPWRS.2015.2395452}}
\end{textblock*}

\begin{abstract}
We consider the problem of deriving an explicit approximate solution of the nonlinear power equations that describe a balanced power distribution network.
We give sufficient conditions for the existence of a practical solution to the power flow equations, and we propose an approximation that is linear in the active and reactive power demands of the PQ buses. For this approximation, which is valid for generic power line impedances and grid topology, we derive a bound on the approximation error as a function of the grid parameters. We illustrate the quality of the approximation via simulations, we show how it can also model the presence of voltage controlled (PV) buses, and we discuss how it generalizes the DC power flow model to lossy networks.
\end{abstract}

\begin{IEEEkeywords}
Power systems modeling, load flow analysis, power distribution networks, fixed point theorem.
\end{IEEEkeywords}

\section{Introduction}

The problem of solving the power flow equations that describe a power system, i.e. computing the steady state of the grid (typically the bus voltages) given the state of power generators and loads, is among the most classical tasks in circuit and power system theory.
An analytic solution of the power flow equations is typically not available, given their nonlinear nature. For this reason, notable effort has been devoted to the design of numerical methods to solve systems of power flow equations, to be used both in offline analysis of a grid and in real time monitoring and control of the system (see for example the review in \cite{Eminoglu2005}).

Specific tools have been derived for the approximate solution of such equations, based on some assumptions on the grid parameters. In particular, if the power lines are mostly inductive, equations relating active power flows and bus voltage angles result to be approximately linear, and decoupled from the reactive power flow equations, resulting in the \emph{DC power flow model} (see the review in \cite{Stott2009}, and the more recent discussion in \cite{Dorfler2013}).

We focus here on a specific scenario, which is the power distribution grid. More specifically, we are considering a balanced medium voltage grid which is connected to the power transmission grid in one point (the distribution substation, or PCC, point of common coupling), and which hosts loads and possibly also microgenerators.

Power distribution grids have recently been the object of an unprecedented interest.
Its operation has become more challenging since the deployment of distributed microgeneration and the appearance of larger constant-power loads 
(electric vehicles in particular).
These challenges motivated the deployment of ICT (information and communications technology) in the power distribution grid, in the form of sensing, communication, and control devices, in order to operate the grid more efficiently, safely, reliably, and within the its voltage and power constraints.
These applications have been reviewed in \cite{Hill2012}, and include real-time feedback control \cite{Vovos2007,Bolognani2013,Zhang2013}, automatic reconfiguration \cite{Baran1989a,DallAnese2014}, and load scheduling \cite{Deilami2011,Sundstroem2012}.
In order to design the control and optimization algorithms for these applications, an analytic (rather than numerical) solution of the power flow equations would be extremely convenient.
Unfortunately, because in the medium voltage grid the power lines are not purely inductive, power flow equations include both the active and reactive power injection/demands, and both the voltage angles and magnitudes, in an entangled way.
The explicit DC power flow model therefore does not apply well.

The contribution of this paper is twofold.
First, we give sufficient conditions for the existence of a practical solution of the nonlinear power flow equations in power distribution networks (Theorem~\ref{theo:taylor_expansion_complex}).
Second, we derive a tractable approximate solution to the power flow problem, linear in the complex power injections, providing a bound on the approximation error (Corollary~\ref{cor:approx}).

In the remainder of this section, we review relevant related works. 
In Section~\ref{sec:model}, we present the nonlinear equations that define the power flow problem. In Section~\ref{sec:main} we present our main existence result, together with the linear approximate solution. We illustrate such approximation via simulations in Section~\ref{sec:simulations}, and we compare it with the classical DC power flow model in Section~\ref{sec:dcpower}, where we also show how to incorporate voltage controlled (PV) buses.

\subsection{Related works}

Conditions have been derived in order to guarantee the existence of a solution to the power flow equations in the scenario of a grid of nonlinear loads. 

Many results are based on the degree theory \cite{Wu1974,Ohtsuki1977,Chua1977}.
In \cite{Pai1999}, for example, exponential model is adopted for the loads, and sufficient conditions for the existence of a solution are derived. These conditions are however quite restrictive, and do not include constant power (PQ) buses.
In \cite{Baran1990}, on the other hand, the existence of a solution is proved by exploiting the radial structure of the grid, via an iterative procedure which is closely related to a class of  iterative numerical methods specialized for the power distribution networks \cite{Kersting1984,Ghosh1999}.

The existence of solutions to the power flow equations has been also studied in order to characterize the security region of a grid, i.e. the set of power injections and demands that yield acceptable voltages across the network. These results include \cite{Wu1982}, and others where however the decoupling between active and reactive power flows is assumed \cite{Ilic1992}. Other works in which the DC power flow assumption plays a key role are \cite{Arapostathis1981} and \cite{Baillieul1982}, both focused on active power flows across the grid. On the other hand, the results in \cite{Thorp1986} focus on the reactive power flows and on the voltage magnitudes at the buses.

In \cite{Molzahn2013} the implicit function theorem is used in order to advocate the existence of a power flow solution, without providing an approximate expression for that.

It is worth noticing that the linear approximate model that we are presenting in this paper shares some similarities with the method of \emph{power distribution factors} \cite{Wood1996}, which allow to express variations in the state (voltage angles) as a linear function of active power perturbations. This method is also typically based on the DC power flow assumptions, even if a formulation in rectangular coordinates (therefore modeling reactive power flows) has been proposed in \cite{Grijalva2002}. Notice that, except for the seminal works on power distribution factors \cite{Sauer1981,Ng1981}, and the more recent results in \cite{Baldick2003}, most of the related results consists in algorithms that allow to compute this factors only numerically, from the Jacobian of the power flow equations.

The approximate power flow solution proposed in this paper has been presented in a preliminary form in \cite{Bolognani2013,BG-JWSP-FD-SZ-FB:13a}, where however no guarantees on the existence of such solution and on the quality of the approximation were given.

\section{Power flow equations}
\label{sec:model}

We are considering a portion of a symmetric and balanced power distribution network, connected to the grid at one point, delivering power to a number of buses, each one hosting loads and possibly also microgenerators.
We denote by $\{0,1,\ldots,\nonodes\}$ the set of buses, where the index $0$ refers to the PCC.

We limit our study to the steady state behavior of the system,
when all voltages and currents are sinusoidal signals at the same frequency.
Each signal can therefore be represented via a complex number $y = |y| e^{j\angle y}$ whose absolute value $|y|$ corresponds to the signal
root-mean-square value, and whose phase $\angle y$ corresponds to the phase of the signal with respect to an arbitrary global reference.

In this notation, the steady state of the network is described by the voltage $v_\node \in \C$ and by the injected current $i_\node \in \C$ at each node $\node$.
We define the vectors $v, i \in \C^{n+1}$, with entries $v_\node$ and $i_\node$, respectively.

Each bus $\node$ of the network is characterized by a law 
relating its injected current $i_\node$ with its voltage $v_\node$.
We model bus $0$ as a \emph{slack node}, in which a voltage is imposed
\begin{equation}
v_0 = V_0 e^{j \slackangle},
\label{eq:PCCmodel}
\end{equation}
where $V_0, \slackangle \in\R$ are such that $V_0 \ge 0$ and $-\pi < \slackangle \le \pi$. 
We model all the other nodes as PQ buses, in which the injected complex power (active and reactive powers) is imposed and does not depend on the bus voltage. This model describes the steady state of most loads, and also the behavior of microgenerators, that are typically connected to the grid via power inverters \cite{Green2007}. According to the PQ model, we have that, at every bus,
\begin{equation}
v_\node\bar i_\node= s_\node\,\quad \forall \node \in \lnodes :=\{1,\ldots,n\},
\label{eq:ZIPModel}
\end{equation}
where $s_\node$ is the imposed complex power.

A more compact way to write these nonlinear power flow equations is the following. 
Let the vectors $i_\lnodes, v_\lnodes, s_\lnodes$ be vectors in $\mathbb C^n$ having $i_\node, v_\node, s_\node$, $\node\in\lnodes$ as entries. Then we have
\begin{equation}
\begin{cases}
v_0 = V_0 e^{j \slackangle}\\
s_\lnodes = \diag(\bar i_\lnodes)v_\lnodes
\end{cases}
\label{eq:nonlineareqs}
\end{equation}
where $\bar i_\lnodes$ is the vector whose entries are the complex conjugates of the entries of $i_\lnodes$ and where $\diag(\cdot)$ denotes a diagonal matrix having the entries of the vector as diagonal elements.

We model the grid power lines via their nodal admittance matrix $Y \in \C^{(n+1)\times(n+1)}$, which gives a linear relation between bus voltages and currents, in the form 
\begin{equation}
i=Yv.
\label{eq:admittancematrix}
\end{equation}

In the rest of the paper, we assume that the shunt admittances at the buses are negligible. Therefore the nodal admittance matrix satisfies
\begin{equation}
Y \1 = 0,
\label{eq:Ykernel}
\end{equation}
where $\1$ is the vector of all ones. Under this assumption, the matrix $Y$ corresponds to the weighted Laplacian of the graph describing the grid, with edge weights equal to the admittance of the corresponding power lines.
We will show in a remark in Section~\ref{sec:main} how this assumption can be relaxed, so that the entire analysis can be extended seamlessly to the case in which shunt admittances of the lines are not negligible.

Considering the same partitioning of the vectors $i,v$ as before, we can partition the admittance matrix $Y$ accordingly, and rewrite \eqref{eq:admittancematrix} as
$$
\left[\begin{array}{c} i_0\\i_\lnodes\end{array}\right]=
\left[\begin{array}{cc} Y_{0 0}&Y_{0 \lnodes}\\Y_{\lnodes 0}&Y_{\lnodes \lnodes}\end{array}\right]
\left[\begin{array}{c} v_0\\v_\lnodes\end{array}\right].
$$
where $Y_{\lnodes\lnodes}$ is invertible because, if the graph representing the grid is a connected graph, then $\1$ is the only vector in the null space of $Y$ \cite{Godsil2001}.
Using \eqref{eq:Ykernel} we then obtain
\begin{equation}
v_\lnodes = v_0 \1 + \X i_\lnodes 
\label{eq:lineareqs}
\end{equation}
where the impedance matrix $\X \in\C^{n\times n}$ is defined as
\begin{equation*}
\X :=Y_{\lnodes \lnodes}^{-1},
\end{equation*}

Objective of the power flow analysis is to determine 
from these equations 
the voltages $v_\node$ and the currents $i_\node$  as functions of $V_0,\slackangle$ and $s_1,\ldots,s_{n}$, namely
\begin{align*}
v_\node=&v_\node(V_0,\slackangle,s_1,\ldots,s_{n})\\
i_\node=&i_\node(V_0,\slackangle,s_1,\ldots,s_{n}).
\end{align*}
In general, because of the nonlinear nature of the loads, we may have no solution or more than one solution for fixed $V_0,\slackangle$ and $s_\node$, as the following simple example shows.

\begin{example}[Two-bus case]
Consider the simplest grid made by two nodes, node $0$ being the slack bus (where we let $\slackangle=0$), and node $1$ being a PQ bus. In this case we have that the following equations have to be satisfied
\begin{equation}
\begin{cases}
v_1 \bar i_1=s_1\\
v_1  = V_0 + \X_{11} i_1
\end{cases}
\label{eq:example}
\end{equation}
Assume
that $\X_{11}=1$, and that $s_1$ is real.
The system of equations \eqref{eq:example} can then be solved analytically. In fact it can be found that if $V_0^2 + 4s_1<0$ there are no solutions. When on the contrary $V_0^2 + 4s_1>0$, there are two distinct solutions
$$
i_{1} = \frac{-V_0 \pm \sqrt{V_0^2 + 4s_1}}{2}.
$$
Notice that, if $V_0$ is large, then the solutions exist and, since $\sqrt{V_0^2+4s_1} =V_0\sqrt{1+4s_1/V_0^2}\simeq V_0(1+2s_1/V_0^2)=V_0 + 2s_1/V_0$, the current $i_1$ take the two values
\begin{align*}
i_1^+ \simeq s_1/V_0, \qquad
i_1^- \simeq - V_0 - \frac{s_1}{V_0}.
\end{align*}
Therefore, when $V_0$ is large, one solution consists in small currents (thus small power losses and voltage close to the nominal voltage across all the network), while the other consists in larger currents, larger power losses, and larger voltage drops.
Of course, the system should be controlled so that it works at the first working point.
\end{example}

The intuition from this simple example is developed in the next section, where the existence and uniqueness of a practical solution to the power flow equations (i.e. a solution at which the grid can practically and reliably be operated) is studied, and an approximate power flow solution (linear in the power terms) is proposed.

\section{Main result}
\label{sec:main}

Define 
$$
f := v_0 \bar i_\lnodes - s_\lnodes = V_0 e^{j\slackangle} \bar i_\lnodes - s_\lnodes,
$$
so that we have
\begin{equation}
i_\lnodes = \frac{1}{\bar v_0}(\bar f+ \bar s_\lnodes)
=\frac{e^{j\slackangle}}{V_0}(\bar f+\bar s_\lnodes).
\label{eq:itof}
\end{equation}

By putting together \eqref{eq:itof} with \eqref{eq:nonlineareqs} and \eqref{eq:lineareqs},
we get
\begin{align*}
s_\lnodes
&= \diag(\bar i_\lnodes)v_\lnodes\\
&= \frac{e^{-j\slackangle}}{V_0} \diag(f+s_\lnodes) 
\left[ e^{j\slackangle}V_0 \1 + \frac{e^{j\slackangle}}{V_0} \X (\bar f+\bar s_\lnodes)\right]\\
&= f+s_\lnodes+\frac{1}{V_0^2}\diag( f+s_\lnodes) \X (\bar f+\bar s_\lnodes),
\end{align*}
and therefore
\begin{equation}
f = -\frac{1}{V_0^2} \diag(f+ s_\lnodes) \X (\bar f+\bar s_\lnodes).
\label{eq:fequation}
\end{equation}

We can determine a ball where there exists a unique solution $f$ to this equation by applying the Banach fixed point theorem \cite{Debnath2005}. 
In order to do so, define the function
$$
G(f):=-\frac{1}{V_0^2}\diag(f+ s_\lnodes) \X (\bar f+\bar s_\lnodes).
$$

Consider the standard 2-norm $\|\cdot\|$ on $\C^n$ defined as
$$
\|x\| := \sqrt{\sum_\node |x_\node|^2}.
$$
Let us then define the following matrix norm\footnotemark  on $\C^{n \times n}$
\begin{align}
\|A\|^* &:= \max_{\node} \|A_{\node\bullet}\|=\max_{\node}\sqrt{\sum_\nodealt |A_{\node\nodealt}|^2}
\label{eq:matrix2norm}
\end{align}
where the notation $A_{\node\bullet}$ stands for the $\node$-th row of $A$.

\footnotetext{The $*$ sign indicates that we are not referring to the norm induced by the  vector norm.}

The following result holds.

\begin{theorem}[Existence of a practical power flow solution]
Consider the vector 2-norm $\|\cdot\|$ on $\C^{n}$,
and the matrix norm $\|\cdot\|^*$ defined in \eqref{eq:matrix2norm}.
If
\begin{equation}\label{eq:disf}
V_0^2 > 4 \|\X\|^* \|s_\lnodes\|
\end{equation}
then there exists a unique solution $v_\lnodes$ of the power flow equations \eqref{eq:nonlineareqs} and  \eqref{eq:lineareqs} in the form
\begin{equation}\label{eq:exp}
v_\lnodes
=
V_0 e^{j\slackangle}
\left(\1+\frac{1}{V_0^2} \X \bar s_\lnodes + \frac{1}{V_0^4} \X \lambda\right)
\end{equation}
where $\lambda \in \C^n$ is such that
\begin{equation}\label{eq:err}
\|\lambda\| \le 4 \|\X\|^* \|s_\lnodes\|^2.
\end{equation}
\label{theo:taylor_expansion_complex}
\end{theorem}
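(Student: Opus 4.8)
The plan is to solve the fixed point equation \eqref{eq:fequation}, $f=G(f)$, by the Banach fixed point theorem on a closed ball of $\C^n$ whose radius is chosen to reproduce the error bound \eqref{eq:err}, and then to read off the voltage from $f$. First I would note that \eqref{eq:disf} forces $V_0>0$, since $V_0\ge 0$ by the slack node model and $V_0^2>4\|\X\|^*\|s_\lnodes\|\ge 0$; hence \eqref{eq:itof} is legitimate and the chain of identities producing \eqref{eq:fequation} is reversible (it uses only substitutions and the fact that $V_0\neq 0$), so that $f$ solves \eqref{eq:fequation} if and only if the pair $(v_\lnodes,i_\lnodes)$ obtained from $f$ through \eqref{eq:itof} and \eqref{eq:lineareqs} solves the power flow equations \eqref{eq:nonlineareqs}--\eqref{eq:lineareqs}. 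Substituting \eqref{eq:itof} into \eqref{eq:lineareqs} gives
\[
v_\lnodes = V_0 e^{j\slackangle}\Big(\1 + \frac{1}{V_0^2}\X\bar s_\lnodes + \frac{1}{V_0^2}\X\bar f\Big),
\]
so such a $v_\lnodes$ is exactly of the form \eqref{eq:exp} with $\lambda=V_0^2\bar f$, and $\|\lambda\|=V_0^2\|f\|$. Therefore a solution of the form \eqref{eq:exp} obeying the bound \eqref{eq:err} corresponds precisely to a fixed point of $G$ in the closed ball $B_\rho:=\{f\in\C^n:\|f\|\le\rho\}$ with $\rho:=4\|\X\|^*\|s_\lnodes\|^2/V_0^2$, and it suffices to prove that $G$ maps $B_\rho$ into itself and is a contraction there.

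Both estimates rest on a single row-wise Cauchy--Schwarz bound in the norm \eqref{eq:matrix2norm}: for $x,y\in\C^n$ the $\node$-th entry of $\diag(x)\X\bar y$ is $x_\node\,\X_{\node\bullet}\bar y$, of modulus at most $|x_\node|\,\|\X_{\node\bullet}\|\,\|y\|\le|x_\node|\,\|\X\|^*\|y\|$, whence $\|\diag(x)\X\bar y\|\le\|\X\|^*\|x\|\,\|y\|$. Write $a:=\|\X\|^*$, $\sigma:=\|s_\lnodes\|$, $u:=f+s_\lnodes$ and $w:=g+s_\lnodes$; note \eqref{eq:disf} says $4a\sigma<V_0^2$, equivalently $\rho<\sigma$ when $\sigma>0$ (the case $\sigma=0$ gives $\rho=0$, $B_\rho=\{0\}$, $G(0)=0$, which is immediate). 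Since $\|u\|,\|w\|\le\rho+\sigma$ on $B_\rho$, taking $x=y=u$ yields
\begin{align*}
\|G(f)\| &\le \frac{a}{V_0^2}\,\|u\|^2 \le \frac{a}{V_0^2}(\rho+\sigma)^2\\
&\le \frac{4a\sigma^2}{V_0^2} = \rho,
\end{align*}
where the step $(\rho+\sigma)^2\le 4\sigma^2$ uses $\rho\le\sigma$; hence $G(B_\rho)\subseteq B_\rho$. For the Lipschitz bound, split $\diag(u)\X\bar u-\diag(w)\X\bar w=\diag(u)\X(\bar u-\bar w)+\diag(u-w)\X\bar w$, apply the bound above to each summand, and use $\|u-w\|=\|f-g\|$ to get, for $f,g\in B_\rho$,
\begin{align*}
\|G(f)-G(g)\| &\le \frac{a}{V_0^2}\big(\|u\|+\|w\|\big)\,\|f-g\|\\
&\le \frac{2a(\rho+\sigma)}{V_0^2}\,\|f-g\| < \frac{4a\sigma}{V_0^2}\,\|f-g\|,
\end{align*}
so that $G$ is a contraction on $B_\rho$ with factor $4a\sigma/V_0^2<1$.

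The Banach fixed point theorem then gives a unique $f^\star\in B_\rho$ with $G(f^\star)=f^\star$; carrying it through the correspondence of the first paragraph produces the unique solution $v_\lnodes$ of \eqref{eq:nonlineareqs}--\eqref{eq:lineareqs} of the stated form, with $\lambda=V_0^2\overline{f^\star}$ and $\|\lambda\|=V_0^2\|f^\star\|\le V_0^2\rho=4\|\X\|^*\|s_\lnodes\|^2$, i.e.\ \eqref{eq:err}. The one step that calls for foresight rather than mechanical manipulation is the choice of $\rho$: it must be small enough that $V_0^2\rho$ does not exceed the target $4\|\X\|^*\|s_\lnodes\|^2$, yet large enough that $G(B_\rho)\subseteq B_\rho$, and the content of the hypothesis \eqref{eq:disf} is exactly that $\rho=4\|\X\|^*\|s_\lnodes\|^2/V_0^2$ meets the self-map inequality $a(\rho+\sigma)^2\le V_0^2\rho$ and the contraction inequality $2a(\rho+\sigma)<V_0^2$ at the same time; everything else is the routine Cauchy--Schwarz bookkeeping shown above.
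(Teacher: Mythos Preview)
Your proof is correct and follows essentially the same Banach fixed-point approach as the paper: the same ball radius $\rho=\delta=4\|\X\|^*\|s_\lnodes\|^2/V_0^2$, the same row-wise Cauchy--Schwarz bound $\|\diag(x)\X\bar y\|\le\|\X\|^*\|x\|\,\|y\|$ (the paper's Lemma~\ref{lem:pnorm} for $p=2$), and the same identification $\lambda=V_0^2\bar f$. The only cosmetic differences are that you verify the self-map inequality directly via $\rho<\sigma\Rightarrow(\rho+\sigma)^2\le4\sigma^2$ rather than invoking the paper's Lemma~\ref{lemma}, and you use an asymmetric telescoping $\diag(u)\X\bar u-\diag(w)\X\bar w=\diag(u)\X(\bar u-\bar w)+\diag(u-w)\X\bar w$ in place of the paper's symmetrized splitting in Lemma~\ref{lem:F}; both yield the same Lipschitz constant.
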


\begin{proof}
Let 
\begin{equation}\label{eq:delta}
\delta:=\frac{4 \|\X\|^* }{V_0^2}\|s_\lnodes\|^2
\end{equation}
and $B:=\{f\in\C^n\ |\ \|f\| \le \delta\}$. 
In order to apply the Banach fixed point theorem, we need to show that, under the hypotheses of the theorem,
\begin{align}
& G(f) \in B\text{ for all $f\in B$} \label{eq:banach1} \\
& \|G(f')-G(f'')\| \le k \|f'-f''\| \text{ for all $f',f''\in B$} \label{eq:banach2}
\end{align}
for a suitable constant $0\le k<1$.
We prove first \eqref{eq:banach1}. Observe that, by using Lemma~\ref{lem:pnorm} in the case $p=2$, we have
\begin{align*}
\|G(f)\|
& \le \frac{1}{V_0^2} \|\X\|^* \ \|f+s_\lnodes\|^2\\
&\le\frac{1}{V_0^2} \|\X\|^* \ (\|f\|+\|s_\lnodes\|)^2\\
& \le \frac{1}{V_0^2} \|\X\|^* \ \left(\delta +\|s_\lnodes\|\right)^2,
\end{align*}
where we used the fact that $\|f\| \le\delta$.
Now, using the definition \eqref{eq:delta} of $\delta$ and Lemma~\ref{lemma} in the Appendix (with $a=\|\X\|^*/V_0^2$ and $b=\|s_\lnodes\|$)
we can argue that, if \eqref{eq:disf} is true, then
$$
\|G(f)\| \le \frac{1}{V_0^2} \|\X\|^* \left(\delta+\|s_\lnodes\|\right)^2
\le \delta.
$$

We prove now \eqref{eq:banach2}. 
It is enough to notice that, by applying Lemma~\ref{lem:F} in the Appendix (with $A=-\X/V_0^2$, $x=f$ and $a=s_\lnodes$) we obtain that 
\begin{align*}
& \|G(f')-G(f'')\| \\
& \qquad \le \frac{1}{V_0^2} \|\X\|^* \left(\|f'+f''\|+2\|s_\lnodes\|\right) \|f'-f''\|\\
& \qquad \le \frac{2}{V_0^2} \|\X\|^* \left( \delta +\|s_\lnodes\|\right)\|f'-f''\|\\
& \qquad = k\|f'-f''\|
\end{align*}
where
$$
k:=\frac{2}{V_0^2} \|\X\|^* \left(\frac{4 \|\X\|^* }{V_0^2}\|s_\lnodes\|^2+\|s_\lnodes\|\right).
$$
Finally, notice that by using \eqref{eq:disf} we obtain
\begin{align*}
k 
& <\frac{2}{V_0^2} \|\X\|^* \frac{V_0^2}{4 \|\X\|^* } \left(\frac{4 \|\X\|^* }{V_0^2}\frac{V_0^2}{4 \|\X\|^* } +1\right)=1.
\end{align*}

Now from \eqref{eq:banach1} and \eqref{eq:banach2}, by applying the Banach fixed point theorem, we can argue that there exists unique solution $f\in B$ of equation \eqref{eq:fequation}.
Then by using \eqref{eq:lineareqs} and \eqref{eq:itof} we have that
\begin{align*}
v_\lnodes
&= V_0 e^{j\slackangle} \1  + \X \frac{e^{j\slackangle}}{V_0}(\bar s_\lnodes+\bar f) \\
& = V_0 e^{j\slackangle} \left(\1 +\frac{1}{V_0^2} \X \bar s_\lnodes+\frac{1}{V_0^2} \X \bar f\right).
\end{align*}
In order to prove \eqref{eq:err} it is enough to define
$\lambda:=V_0^2 \bar f$.
\end{proof}

\begin{remark}
The norm $\|\X\|^*$ can be put in direct relation with the induced matrix 2-norm $\|\X\|$, and with structural properties of the graph that describes the power grid. Indeed
$$
\|\X\|^* = \max_\node \|e_\node^T \X\| \le 
\max_{\|v\| = 1}\|\X^T v\| = \|\X\|,
$$
where $e_\node$ is the $\node$-th vector of the canonical base.
It can be shown that 
$\left[\begin{smallmatrix} 0 & 0 \\ 0 & \X \end{smallmatrix}\right]$ 
is one possible pseudoinverse of the weighted Laplacian $Y$ of the grid.
Therefore we have that
$$
\|\X\|^* \le \frac{1}{\sigma_2(Y)},
$$
where $\sigma_2(Y)$ is the second smallest singular value of $Y$ (the smallest one being zero).
In the special case in which all the power lines have the same $X/R$ ratio (i.e. their impedances have the same angle, but different magnitudes), then $\sigma_2(Y)$ corresponds also to the second smallest eigenvalue of the Laplacian, which is a well known metric for the graph connectivity.
Given this relation between $\|\X\|^*$ and $\|\X\|$, the assumption \eqref{eq:disf} in Theorem~\ref{theo:taylor_expansion_complex} is satisfied if
$$
V_0^2 > 4 \frac{\|s_\lnodes\|}{\sigma_2(Y)}.
$$
This condition resembles similar results that have been proposed for example in \cite{Ilic1992} for the analysis of the feasibility of the power flow problem.
Interestingly, the role of similar spectral connectivity measures of the grid has been recently investigated also for grid synchronization and resilience problems (see the discussion in \cite{Doerfler2013} and 
\cite{Pagani2013}, respectively).
\end{remark}

\begin{corollary}[Approximate power flow solution]
\label{cor:approx}
Consider the vector 2-norm $\|\cdot\|$ on $\C^{n}$,
and the matrix norm $\|\cdot\|^*$ defined in \eqref{eq:matrix2norm}.
Let the assumption of Theorem~\ref{theo:taylor_expansion_complex} be satisfied. Then the solution $v_\lnodes$ of the power flow nonlinear equations is approximated by 
\begin{equation}
\hat v_\lnodes
:=
V_0 e^{j\slackangle}
\left(\1+\frac{1}{V_0^2} \X \bar s_\lnodes\right),
\label{eq:approximate}
\end{equation}
and the approximation error satisfies 
\begin{equation}
|v_\node - \hat v_\node| \le 
\frac{4}{V_0^3} \|\X_{\node\bullet}\| \|\X\|^* \|s_\lnodes\|^2,
\label{eq:buserror2}
\end{equation}
where, as before, $\X_{\node\bullet}$ is the $\node$-th row of $\X$.
\end{corollary}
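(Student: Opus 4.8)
The plan is to read the error bound straight off the explicit expansion \eqref{eq:exp} supplied by Theorem~\ref{theo:taylor_expansion_complex}. Since the hypothesis of the corollary is precisely the hypothesis \eqref{eq:disf} of that theorem, I may assume that the (unique) power flow solution $v_\lnodes$ has the form \eqref{eq:exp} for some remainder vector $\lambda \in \C^n$ with $\|\lambda\| \le 4\|\X\|^* \|s_\lnodes\|^2$ by \eqref{eq:err}. Comparing \eqref{eq:exp} with the definition \eqref{eq:approximate} of $\hat v_\lnodes$, the constant term $\1$ and the first-order term $\frac{1}{V_0^2}\X\bar s_\lnodes$ cancel, and since $V_0 > 0$ (which \eqref{eq:disf} forces) one is left with
$$
v_\lnodes - \hat v_\lnodes = \frac{e^{j\slackangle}}{V_0^3}\,\X\lambda .
$$

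Next I would isolate a single node $\node \in \lnodes$. Extracting the $\node$-th component gives $v_\node - \hat v_\node = \frac{e^{j\slackangle}}{V_0^3}\,\X_{\node\bullet}\lambda$, so taking absolute values, using $|e^{j\slackangle}| = 1$, and applying the Cauchy--Schwarz inequality to the inner product of the row $\X_{\node\bullet}$ with $\lambda$,
$$
|v_\node - \hat v_\node| = \frac{1}{V_0^3}\,\bigl|\X_{\node\bullet}\lambda\bigr| \le \frac{1}{V_0^3}\,\|\X_{\node\bullet}\|\,\|\lambda\| .
$$
Substituting the bound $\|\lambda\| \le 4\|\X\|^*\|s_\lnodes\|^2$ from \eqref{eq:err} then yields exactly \eqref{eq:buserror2}.

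There is essentially no obstacle here: the statement is a corollary in the strict sense, obtained by isolating one coordinate of the higher-order remainder in \eqref{eq:exp} and estimating it with Cauchy--Schwarz. The only points that need (minimal) care are bookkeeping ones: that the prefactor $V_0 e^{j\slackangle}$ in \eqref{eq:exp} times $1/V_0^4$ collapses to $1/V_0^3$, with $V_0$ strictly positive thanks to \eqref{eq:disf}; and that the per-node step naturally produces the row norm $\|\X_{\node\bullet}\|$ appearing in \eqref{eq:buserror2} rather than the coarser matrix norm $\|\X\|^*$ (using $\|\X_{\node\bullet}\| \le \|\X\|^*$ would still give a valid but looser bound).
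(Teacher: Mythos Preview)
Your proposal is correct and matches the paper's own proof essentially verbatim: the paper also reads off $|v_\node - \hat v_\node| = \frac{1}{V_0^3}\,|\X_{\node\bullet}\lambda|$ from \eqref{eq:exp}, then applies Cauchy--Schwarz together with the bound \eqref{eq:err} on $\|\lambda\|$.
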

\begin{proof}
We have, from \eqref{eq:exp}, for any bus $\node \in \lnodes$,
$$
|v_\node - \hat v_\node| 
= \frac{1}{V_0^3} \left| \X_{\node\bullet} \lambda \right|,
$$
By using Cauchy-Schwarz inequality and the bound \eqref{eq:err} we obtain \eqref{eq:buserror2}.
\end{proof}

Theorem~\ref{theo:taylor_expansion_complex} and Corollary~\ref{cor:approx} can be interpreted as a way to model the grid as a linear relation between the variables $v$ and $s$.
In fact, power flow equations already contain the simple linear relation \eqref{eq:admittancematrix} between the variables $i$ and $v$, together with  an implicit nonlinear relation between $v$ and $s$. The previous results say that, in case $s$ is sufficiently small, there is a way to make the relation between $v$ and $s$ explicit (Theorem~\ref{theo:taylor_expansion_complex}), 
and to find a linear approximated relation between these variables
(Corollary~\ref{cor:approx}).
This interpretation will be further elaborated in Section~\ref{sec:dcpower}, in order to extend the model to grids where voltage regulated (PV) nodes are present.

Notice moreover that the approximate model~\eqref{eq:approximate} can be manipulated, via premultiplication by the admittance matrix $Y$, in order to obtain the sparse linear equation
\begin{equation}
Y \hat v = \frac{e^{j\theta_0}}{V_0} \bar s,
\label{eq:sparse}
\end{equation}
where $\hat v$ and $s$ are the augmented vectors
$\left[\begin{smallmatrix}\hat v_0 \\ \hat v_\lnodes\end{smallmatrix}\right]$
and
$\left[\begin{smallmatrix}-\1^T s_\lnodes \\ s_\lnodes\end{smallmatrix}\right]$, respectively.
The resemblance of \eqref{eq:sparse} with the DC power flow model will be investigated in Section~\ref{sec:dcpower}, where the proposed approximate solution is presented in polar coordinates.

\begin{remark}[Non-zero shunt admittances] The grid model \eqref{eq:admittancematrix} in which the matrix $Y$ is assumed to satisfy \eqref{eq:Ykernel} is based on the assumption of zero nodal shunt admittances. In the case in which shunt admittances are not negligible, the proposed analysis can be modified accordingly. 
In this more general case the matrix $Y$ is invertible, and for almost all grid parameters of practical interest, the submatrix $Y_{\lnodes \lnodes}$ is also invertible.
Equation \eqref{eq:lineareqs} becomes 
$
v_\lnodes =  v_0 w + \X i_\lnodes, 
$
where $\X :=Y_{\lnodes \lnodes}^{-1}$, and where 
$
w:=-Y_{\lnodes \lnodes}^{-1}Y_{\lnodes 0}\in\C^{n}
$
is a perturbation of the vector $\1$ and corresponds to the normalized no-load voltage profile of the grid.

The reasoning of Theorem \ref{theo:taylor_expansion_complex} can be repeated by defining 
$f := V_0 e^{j\slackangle} W \bar i_\lnodes - s_\lnodes$,
where the diagonal matrix
$
W := \diag(w)
$
is a perturbation of the identity matrix.
In this case, $f$ has to satisfy the equation $f = G(f)$ where
$$
G(f):=-\frac{1}{V_0^2}\diag(f+ s_\lnodes) W^{-1} \X \bar W^{-1} (\bar f+\bar s_\lnodes).
$$

Condition \eqref{eq:disf} for the existence of a power flow solution is then replaced by the condition
$$
V_0^2 > 4 \|W^{-1} \X \bar W^{-1}\|^* \|s_\lnodes\|,
$$
and the approximate power flow solution in Corollary~\ref{cor:approx} becomes
$$
\hat v_\lnodes
:=
V_0 e^{j\slackangle} \left(w+\frac{1}{V_0^2} \X \bar W^{-1} \bar s_\lnodes\right).
$$
\end{remark}

The results of Theorem~\ref{theo:taylor_expansion_complex} and Corollary~\ref{cor:approx} hold also for other vector norms, different from the vector 2-norm, as the following remarks show.

\begin{remark}[$p$-norms]
Consider the standard p-norm $\|\cdot\|_p$ on $\C^{n}$ defined as
\begin{equation}\label{eq:vectorpnorm}
\|x\|_p := \left(\sum_\node |x_\node|^p\right)^{1/p}.
\end{equation}
if $1\le p<\infty$ and by
$$
\|x\|_{\infty} := \max_\node |x_\node |.
$$

Let us then define the following matrix norm on $\C^{n \times n}$
\begin{equation}\label{eq:matrixpnorm}
\|A\|^*_p := \max_{\node} \|A_{\node\bullet}\|_p
\end{equation}
where, as before, the notation $A_{\node\bullet}$ denotes the $\node$-th row of $A$.

We have that Theorem~\ref{theo:taylor_expansion_complex} holds with respect to the vector $p$-norm $\|\cdot\|_p$ on $\C^{n}$ and 
the matrix norm $\|\cdot\|^*_q$ on $\C^{n \times n}$
where $p,q \in [1,\infty) \cup \{\infty\}$ are such that $1/p+1/q=1$.
This follows from Lemma~\ref{lem:pnorm}.
The sufficient condition \eqref{eq:disf} becomes
$$
V_0^2 > 4 \|\X\|_q^* \|s_\lnodes\|_p
$$
As before, the statement of Theorem~\ref{theo:taylor_expansion_complex} allows to derive a bound on the error of the approximate power flow solution proposed in Corollary~\ref{cor:approx} (this time using H\"older's inequality), replacing \eqref{eq:buserror2} with
\begin{equation*}
|v_\node - \hat v_\node| \le 
\frac{4}{V_0^3} \|\X_{\node\bullet}\|_q \|\X\|_q^* \|s_\lnodes\|_p^2.
\end{equation*}

The particular case $p=1$, $q=\infty$ has an interesting physical interpretation. It can be inferred from \eqref{eq:lineareqs} that, as power line impedances have positive resistance and reactance,
$$
|\X_{\node\node}| \ge |\X_{\node\nodealt}| \quad \text{for all $\node,\nodealt \in \lnodes$}.
$$
Therefore $\|\X_{\node\bullet}\|_\infty = |\X_{\node\node}|$, which, 
in a radial network, is the length of the path (in terms of the magnitude of the path impedance) connecting bus $\node$ to bus $0$. We denote such length by $\ell_\node$.
Then, $\|\X\|_\infty^* = \max_{\node} |\X_{\node\node}|$
corresponds in a radial network to the \emph{breadth of the grid $\ell_\text{max}$}, defined as the length of the longest path connecting a bus $\node$ to bus $0$.
On the other hand, $\|s_\lnodes\|_1$ corresponds to a metric for the \emph{total load} of the grid, defined as the sum $S_\text{tot}$ of the apparent powers $|s_\node|$ at every node $\node \in \lnodes$. 
Therefore, in this case, the sufficient condition for the existence of a practical solution becomes
$$
V_0^2 > 4 \ell_\text{max} S_\text{tot},
$$
and the bound on the approximation error is 
\begin{equation}
|v_\node - \hat v_\node| \le 
\frac{4}{V_0^3} \ell_\node \ell_\text{max} S_\text{tot}^2.
\label{eq:buserror1}
\end{equation}
\end{remark}

In general, Theorem~\ref{theo:taylor_expansion_complex} gives only sufficient conditions for the existence and uniqueness of a solution of the nonlinear power flow equations, and a bound on the error of the approximation \eqref{eq:approximate}. By revisiting the example proposed in Section~\ref{sec:model} we show that both the sufficient condition \eqref{eq:disf} and the bound \eqref{eq:buserror2} can be tight.

\begin{example}[Two-bus case]
Consider the same example considered in Section~\ref{sec:model}.
Notice that, regardless of the vector norm that is chosen,
$$
\|Z\|^* = |Z_{11}| = 1 
\quad \text{and} \quad
\|s\| = |s_1|.
$$
The sufficient condition \eqref{eq:disf} in Theorem~\ref{theo:taylor_expansion_complex} then becomes
$$
V_0^2 > 4 |s_1|.
$$
In the case that we were considering in the example, with $s_1$ real and negative (corresponding to an active power load), it is easy to see that this condition is also a necessary condition for the existence of a solution.
In the special case in which the condition is marginally satisfied ($s_1 = -V_0^2 / 4$), it is possible to compute both the exact solution
$$
v_1 = V_0 + i_1 = \frac{1}{2} V_0, 
$$ 
and the approximate solution according to \eqref{eq:approximate},
$$
\hat v_1 = V_0 \left( 1 + \frac{1}{V_0^2} s_1 \right) = \frac{3}{4} V_0.
$$
The approximation error is clearly $|v_1 - \hat v_1| = V_0/4$,
which marginally satisfies the approximation error bound \eqref{eq:buserror2}, i.e.
$$
|v_1 - \hat v_1| \le \frac{4}{V_0^3} |s_1|^2 = \frac{V_0^2}{4}.
$$
\end{example}

\section{Simulations}
\label{sec:simulations}

\begin{figure*}[t]
\centering
\includegraphics[scale=1.06]{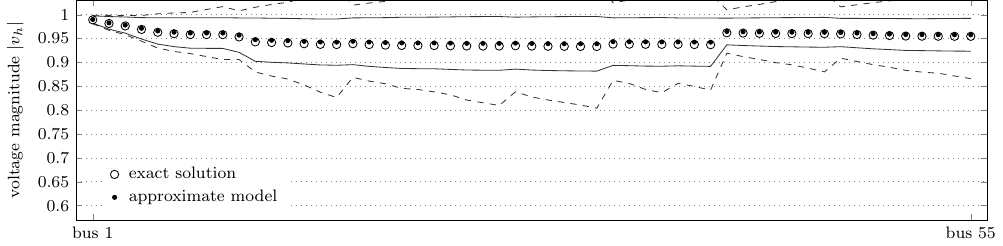}
\caption{Bus voltages in the modified IEEE 123 test feeder \cite{github_approx-pf}. The circles represent the exact solution of the nonlinear power flow equations. The dots represent the approximate solution $\hat v$ given by \eqref{eq:approximate}. The solid line represent the bound \eqref{eq:buserror2} on the approximation error obtained by considering the 2-norm and the corresponding matrix norm $\|\X\|^*$, while the dashed line represents the error bound \eqref{eq:buserror1}, obtained by adopting the norms $\|s_\lnodes\|_1$ and $\|\X\|_\infty^*$.}
\label{fig:voltage}
\end{figure*}

\begin{figure*}[t]
\centering
\includegraphics[scale=1.06]{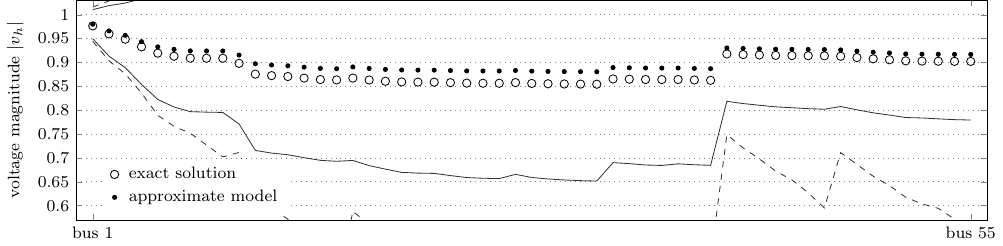}
\caption{Bus voltages in the modified IEEE 123 test feeder \cite{github_approx-pf}, in the case of uniform overload (where all active and reactive demands are doubled). The same convention of Figure~\ref{fig:voltage} is adopted for the approximate solution, the exact solution, and the approximation bounds.}
\label{fig:voltage_overload}
\end{figure*}

\begin{figure*}[t]
\centering
\includegraphics[scale=1.06]{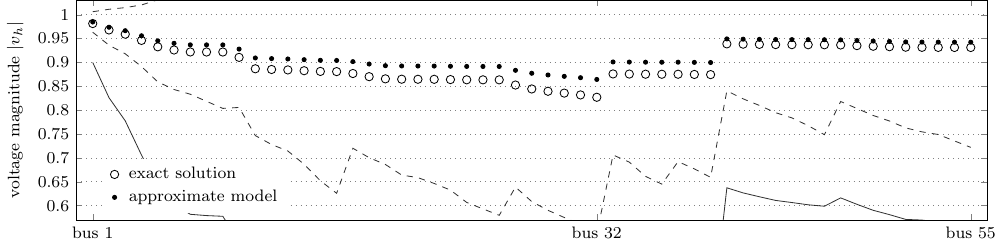}
\caption{Bus voltages in the modified IEEE 123 test feeder \cite{github_approx-pf}, in the case of lumped overload (where the power demand of bus 32 is increased to 2 MW and 1 MVAR). The same convention of Figure~\ref{fig:voltage} is adopted for the approximate solution, the exact solution, and the approximation bounds.}
\label{fig:voltage_spotoverload}
\end{figure*}

In order to illustrate the results presented in the previous section, we considered a symmetric balanced testbed inspired to the IEEE 123 test feeder \cite{Kersting2001}.

The details and the data of the adopted testbed are available online \cite{github_approx-pf}, together with the Matlab / GNU Octave and MatPower source code used for the simulations.

In a per-unit notation, where $V_0$ is taken as the reference nominal voltage, and $S_N = 1$MW is taken as the reference nominal power, we have that
\begin{align*}
& 
\begin{cases}
\|s\| = 0.7015 \\
\|\X\|^* = 0.1706
\end{cases}
&&
\begin{cases}
\|s\|_1 = S_\text{tot} = 3.9930 \\
\|\X\|_\infty^* = \ell_\text{max} = 0.0460.
\end{cases}
\end{align*}
Both the sufficient condition of Theorem~\ref{theo:taylor_expansion_complex}, and its modified $p$-norm version ($p=1$, $q=\infty$) are thus verified. Therefore, existence and uniqueness of a practical solution of the power flow equations is guaranteed.

In Figure~\ref{fig:voltage} we reported the true solution of the original nonlinear power flow equations (obtained numerically) together with the approximate linear solution proposed in Corollary~\ref{cor:approx}.
The average and maximum approximation errors are reported in Table~\ref{tbl:errors}.
In Figure~\ref{fig:voltage} we also reported the bounds \eqref{eq:buserror2} and \eqref{eq:buserror1} on the approximation error (as a solid and dashed curve, respectively). Both these bounds are quite conservative in the assessment of the quality of the approximation. However, they guarantee that the power flow equations have a unique solution in a region which is practical for the operation of the power distribution grid (other uninteresting solutions of the power flow equations may exist, at unacceptable voltage levels).

In order to better assess the quality of the approximated solution proposed in Corollary~\ref{cor:approx}, an extensive set of variations of the original testbed have been considered, including:
\begin{itemize}
\item different total grid load and load distribution
\item shunt capacitors for static voltage support
\item voltage regulation
\item different tap position at the PCC.
\end{itemize}
These simulations are available online \cite{github_approx-pf}, together with the source code used to generate them, and a selection of them is included hereafter.

In a first case, the active and reactive power demand of all loads has been doubled, in order to simulate a uniform overload of the grid. 
The uniform scaling of the vector of power demands $s$ affects in the same way (i.e. quadratically) both the approximation bounds \eqref{eq:buserror2} and \eqref{eq:buserror1}. While the bounds are still conservetive, Figure~\ref{fig:voltage_overload} shows that the approximation error is indeed larger compared to nominal case.

In a second case, the demand of one single bus (bus 32) has been increased to 2 MW of active power and 1 MVAR of reactive power, i.e. 50 times the original demand.
Figure~\ref{fig:voltage_spotoverload} shows how the approximation error is affected, similarly to the uniform overload case.
However, in this case, the effect of the overload on the approximation bounds \eqref{eq:buserror2} and \eqref{eq:buserror1} is qualitatively different, as different norms of $s$ are involved in the two bounds.
This example also shows that it is not possible, for a given grid, to know a-priori which particular $p$-norm yields the tightest bounds.

The approximation errors for these two cases are reported in Table~\ref{tbl:errors}, for comparison with the nominal case.

\begin{table}
\centering
\ra{1.2}
\addtolength{\belowbottomsep}{1mm}
\begin{tabular}{@{}lcccc@{}}
\toprule
& \multicolumn{2}{c}{absolute error} & \multicolumn{2}{c}{relative error${}^\dagger$} \\
\cmidrule{2-3} \cmidrule{4-5}
& avg. & max & avg. & max \\
\midrule
\multicolumn{5}{@{}l}{\textbf{Nominal case} (Figure \ref{fig:voltage})} \\
~~Voltage magnitude [p.u.] & 0.0041 & 0.0056 & 7.88\% & 8.45\% \\
~~Voltage angle [deg] & 0.0097 & 0.0178 & 0.43\% & 0.66\% \\
\multicolumn{5}{@{}l}{\textbf{Uniform overload} (Figure \ref{fig:voltage_overload})} \\
~~Voltage magnitude [p.u.] & 0.0191 & 0.0261 & 16.72\% & 17.94\% \\
~~Voltage angle [deg] & 0.0999 & 0.1782 & 2.09\% & 3.02\% \\
\multicolumn{1}{@{}l}{\textbf{Lumped overload} (Figure \ref{fig:voltage_spotoverload})} \\
~~Voltage magnitude [p.u.] & 0.0197 & 0.0373 & 18.99\% & 21.59\% \\
~~Voltage angle [deg] & 0.0994 & 0.3112 & 2.12\% & 4.27\% \\
\bottomrule
\end{tabular}
\begin{minipage}{0.93\columnwidth}
{\scriptsize ${}^\dagger$ The relative error for the voltage magnitude and angle at each bus $h$ is computed with respect to the voltage drop $V_0 - |v_h|$ and to the angle difference $|\theta_0 - \angle{v_h}|$, respectively.}
\end{minipage}
\caption{Absolute and relative approximation errors}
\label{tbl:errors}
\end{table}

\section{Approximation in polar coordinates: PV nodes and a comparison with the DC power flow model}
\label{sec:dcpower}

In this section, we show how to translate the model proposed in Corollary~\ref{cor:approx} to polar coordinates, i.e. in terms of voltage magnitudes and phases. 

\subsection{Voltage magnitudes}
\label{ssec:pvbuses}

If we denote by the symbol $|y|$ the vector having as entries the magnitudes of the entries of a complex vector $y$, then from the approximate model proposed in Corollary~\ref{cor:approx} we can obtain
$$
|\hat v_\lnodes|
= V_0 \left|\1 + \frac{1}{V_0^2} \X \bar s_\lnodes \right| .
$$
If we assume that that $\|\X \bar s_\lnodes\| / V_0^2 \ll 1$, i.e. that the voltage drops are much smaller than the nominal voltage, then from the fact that $|1+a|\approx 1+\realpart(a)$ for $|a| \ll 1$, we obtain 
\begin{equation}
\label{eq:voltagemag}
|\hat v_\lnodes'|
= \1 V_0 + \frac{1}{V_0} \realpart(\X \bar s_\lnodes).
\end{equation}
The approximation \eqref{eq:voltagemag} gives the opportunity to understand how the proposed model can be useful in a more general context, such as when there are voltage regulated (PV) buses in the grid.
Let $\vnodes$ be the subset of PV buses, whose voltage magnitude is regulated to $|v_{\vnodes}| = \eta$, and let $\pnodes = \lnodes \backslash \vnodes$ be the remaining set of PQ buses.
Let then $\X$ be divided into the corresponding blocks
$$
\X = \begin{bmatrix}
\X_{\vnodes \vnodes} 
& \X_{\vnodes \pnodes} \\
\X_{\pnodes \vnodes}
& \X_{\pnodes \pnodes} \\
\end{bmatrix}.
$$
Therefore, approximation \eqref{eq:voltagemag} for the nodes in $\vnodes$ becomes
\begin{align*}
\eta &= \1 V_0 + \frac{1}{V_0} 
\Big(
\realpart(\X_{\vnodes \vnodes} \bar s_{\vnodes}) + 
\realpart(\X_{\vnodes \pnodes}   \bar s_{\pnodes})
\Big) \\
&= \1 V_0 + \frac{1}{V_0} 
\Big(
\realpart(\X_{\vnodes \vnodes}) p_\vnodes + \imagpart(\X_{\vnodes \vnodes}) q_{\vnodes}\\
&\qquad\qquad\qquad\qquad+ \realpart(\X_{\vnodes \pnodes}) p_\pnodes + \imagpart(\X_{\vnodes \pnodes}) q_{\pnodes}
\Big).
\end{align*}
This linear relation can be inverted, in order to obtain the following expression for the reactive power injection of the nodes in $\vnodes$
\begin{multline*}
q_{\vnodes} = -\imagpart(\X_{\vnodes\vnodes})^{-1} 
\Big(
V_0 (V_0 \1 - \eta) + 
\realpart(\X_{\vnodes \vnodes}) p_\vnodes \\
+ \realpart(\X_{\vnodes \pnodes}) p_\pnodes 
+ \imagpart(\X_{\vnodes \pnodes}) q_{\pnodes}
\Big).
\end{multline*}
Plugging this expression in the model proposed in Corollary~\ref{cor:approx} yields an approximate solution of the power flow equations that is a linear function of the active and reactive power of the the PQ buses, and of the active power and voltage magnitude set-points of the PV buses. The quality of this approximated model has been verified numerically on the same testbed of Section~\ref{sec:simulations}, by assuming that buses 15 and 51 are voltage regulated to $\eta = V_0$.
Also this simulation is available in \cite{github_approx-pf}, and the results are reported in Figure~\ref{fig:voltage_pvbus}.

\begin{figure*}[t]
\centering
\includegraphics[scale=1.06]{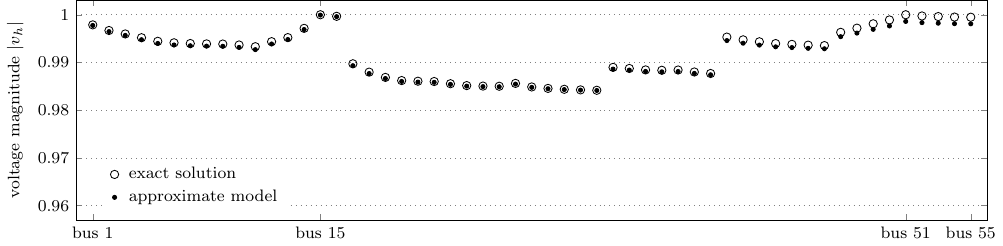}
\caption{Bus voltages in the modified IEEE 123 test feeder \cite{github_approx-pf}, in the case of two voltage regulated buses (bus 15 and bus 51). The voltage reference for these two buses has been set to 1 p.u. The circles represent the exact solution of the nonlinear power flow equations, while the dots represent the approximate solution $\hat v$ given by \eqref{eq:approximate}, where the two PV buses have been included as described in Section~\ref{ssec:pvbuses}.}
\label{fig:voltage_pvbus}
\end{figure*}

\subsection{Voltage phases}

We now consider the phases of the approximate power flow solution, and we show how the proposed approximation can be seen as a generalization of the DC power flow model. 
If we denote by $\hat \theta_\lnodes := \phase{\hat v_\lnodes}$ the vector having as entries the phases of the entries of the approximate solution $\hat v_\lnodes$, 
we have
\begin{equation}
\label{eq:THETAmodel}
\hat \theta_\lnodes
= \slackangle \1 + \phase{\1 + \frac{1}{V_0^2} \X \bar s_\lnodes}.
\end{equation}
By using the fact that $\phase{1+a}\approx \imagpart(a)$ for $|a| \ll 1$, we obtain the intermediate approximated model
\begin{equation}
\label{eq:LINmodel}
\hat \theta_\lnodes' := \slackangle \1 + \frac{1}{V_0^2} \imagpart\left( \X \bar s_\lnodes\right).
\end{equation}
In the special case in which power lines are assumed to be purely inductive, namely $\X = jX$, one obtains 
\begin{equation}
\label{eq:dcpowerflow}
\hat \theta_\lnodes' = \slackangle \1 + \frac{1}{V_0^2} X p_\lnodes,
\end{equation}
Equation \eqref{eq:dcpowerflow} is known in the literature as DC power flow model (see \cite{Stott2009} and references therein) which is based exactly on the assumption of small voltage drops and purely inductive lines. Equation \eqref{eq:dcpowerflow} is typically shown in the following equivalent form
$$
B \theta \approx \frac{1}{V_0^2} p
$$
where $\theta$ and $p$ are the vectors in $\C^{n+1}$ with entries $\theta_\node$ and $p_\node$, where $B$ is the power lines susceptance matrix (i.e. a real matrix such that $Y = j B$), and where $V_0$ is the slack voltage (the voltage magnitude at the PCC).

These different approximations of the bus voltage angles have been plotted in Figure~\ref{fig:angles} for the same testbed. It is clear how the model proposed in Corollary~\ref{cor:approx} approximates very well the true voltage angles. On the other hand, the DC power flow model presents a much larger approximation error, due to the inaccurate assumption that power lines are purely inductive. Indeed, the model \eqref{eq:LINmodel}, which differs from the DC power flow model only for the fact that this assumption has not been made, provides a much more accurate approximation.

In light of this analysis, the model in Corollary~\ref{cor:approx} can also be interpreted as a generalization of the DC power flow model to the case of generic power line impedances.

\begin{figure*}[tb]
\centering
\includegraphics[scale=1.06]{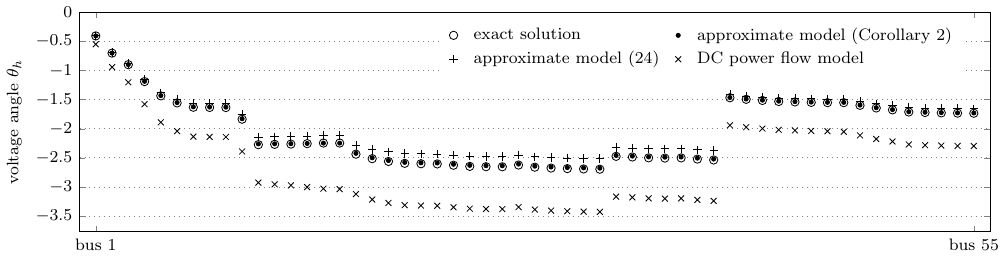}
\caption{Bus voltage angles in the modified IEEE 123 test feeder \cite{github_approx-pf}, according to the different approximations analyzed in Section~\ref{sec:dcpower}. The circles represent the true solution of the nonlinear power flow equations. The dots represent the proposed approximate solution proposed in Corollary~\ref{cor:approx} and given by \eqref{eq:THETAmodel}. The $+$ signs represent the intermediate model \eqref{eq:LINmodel}, where power lines have not been assumed to be purely inductive. The $\times$ signs represent the DC power flow model.}
\label{fig:angles}
\end{figure*}

\section{Conclusion}
\label{sec:conclusions}

In Theorem~\ref{theo:taylor_expansion_complex} we derived a sufficient condition for the existence of a practical solution to the nonlinear power flow equations that describe a power distribution network.
This condition is proved to be also necessary in at least one example, and is verified in the IEEE testbed that we adopted for numerical validation.
Different variations of this condition can be obtained by adopting different norms (and thus different invariant sets for the Banach fixed point theorem), yielding various physical interpretations. 

In the case of a grid of constant power (PQ) buses and one slack bus, the existence result immediately returns the approximate solution of the power flow equations presented in Corollary~\ref{cor:approx}. Such approximation is conveniently linear in the active and reactive power references of the buses, and analytical bounds for its error are given.
We showed that, via some manipulations, it is also possible to consider different bus models, and in particular voltage regulated (PV) buses.

We finally assessed the quality of the proposed model via numerical simulations on a set of variations of the IEEE 123 test feeder, showing how it outperforms the classical DC power flow model. The proposed model has the potential of serving as a flexible tool for the design of control, monitoring, and estimation strategies for the power distribution grid.

\appendix

\section{}
\label{app:norms}

\begin{lemma}
Let $\|x\|_p$ be the $p$-norm of a complex vector as defined in \eqref{eq:vectorpnorm}, and $\|A\|^*_q$ be the matrix norm defined in \eqref{eq:matrixpnorm}. Assume that $p,q \in [1,\infty) \cup \{\infty\}$ are such that $1/p+1/q=1$. Then
$$
\max_{\substack{\|x\|_p=1\\\|y\|_p=1}} 
\left\| \diag(x) A y \right\|_p = \|A\|^*_q.
$$
\label{lem:pnorm}
\end{lemma}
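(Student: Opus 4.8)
The plan is to reduce the joint maximum over $x$ and $y$ to a two-stage optimization that decouples cleanly. First I would compute the $\node$-th entry of $\diag(x) A y$ as $x_\node \, (A_{\node\bullet} y)$, so that
$$
\|\diag(x) A y\|_p^p = \sum_\node |x_\node|^p \, |A_{\node\bullet} y|^p .
$$
For a fixed $y$ and $x$ ranging over the unit $p$-sphere, the right-hand side is a convex combination of the nonnegative numbers $|A_{\node\bullet} y|^p$ with weights $|x_\node|^p$ summing to $1$; hence it is bounded by $\max_\node |A_{\node\bullet} y|^p$, with equality attained when $x$ is the canonical basis vector $e_{\node^\star}$ associated with a maximizing row. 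This gives $\max_{\|x\|_p = 1} \|\diag(x) A y\|_p = \max_\node |A_{\node\bullet} y|$.

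Next I would use the duality between the $p$-norm and the $q$-norm, valid on $\C^n$ whenever $1/p + 1/q = 1$ (including the endpoint cases): for any row vector $a$ one has $\max_{\|y\|_p = 1} |a\,y| = \|a\|_q$, where the upper bound is H\"older's inequality and a maximizer is given, for $1 < p < \infty$, by $y_\nodealt = \overline{a_\nodealt}\, |a_\nodealt|^{q-2} / \|a\|_q^{q-1}$. Taking the maximum over $\|y\|_p = 1$ of $\max_\node |A_{\node\bullet} y|$ and exchanging the two maxima then yields $\max_\node \max_{\|y\|_p=1} |A_{\node\bullet} y| = \max_\node \|A_{\node\bullet}\|_q = \|A\|_q^*$, which establishes the inequality $\max_{\|x\|_p=\|y\|_p=1} \|\diag(x) A y\|_p \le \|A\|_q^*$.

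Finally I would obtain the matching lower bound, and hence equality, by exhibiting an explicit optimal pair: choose $\node^\star$ with $\|A_{\node^\star\bullet}\|_q = \|A\|_q^*$, let $y^\star$ be the H\"older-extremal vector for the row $A_{\node^\star\bullet}$, and set $x^\star = e_{\node^\star}$; then $\|x^\star\|_p = \|y^\star\|_p = 1$ and $\|\diag(x^\star) A y^\star\|_p = |A_{\node^\star\bullet} y^\star| = \|A\|_q^*$. The only parts requiring a little care — and the closest thing to an obstacle — are the degenerate cases $p = 1$ ($q = \infty$) and $p = \infty$ ($q = 1$), where the explicit formula for $y^\star$ is replaced by the obvious direct choice, together with the bookkeeping for zero rows of $A$; none of this is substantive, since the argument ultimately just records the fact that $\diag(x) A y$ "selects" one row of $A$ and pairs it against $y$.
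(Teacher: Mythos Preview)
Your proof is correct and follows essentially the same approach as the paper: both use H\"older's inequality on each row $A_{\node\bullet} y$ for the upper bound and attain equality by choosing $x = e_{\node^\star}$ together with a H\"older-extremal $y$. The only cosmetic difference is that you first optimize over $x$ via the convex-combination observation (which immediately identifies $x = e_{\node^\star}$ as optimal) and then over $y$, whereas the paper bounds each entry $|z_\node| \le |x_\node|\,\|A\|_q^*\,\|y\|_p$ uniformly before summing; the endpoint cases $p\in\{1,\infty\}$ are handled the same way in both.
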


\begin{proof}
We first prove the lemma in one direction. We have that, if $z:=\diag(x)Ay$, then
\begin{multline*}
|z_\node|
=\left|  x_\node \sum_\nodealt A_{\node\nodealt} y_\nodealt \right|
=|x_\node|\ \left| A_{\node\bullet} y \right| \le \\
\le |x_\node|\  \| A_{\node\bullet}\|_q\  \|y\|_p
\le |x_\node |\  \| A\|^*_q \ \|y\|_p 
\end{multline*}
where we have applied Holder's inequality.
Hence, in case $p<\infty$ we have that
\begin{multline*}
\|z\|_p
=\left(\sum_\node |z_\node|^p \right)^{1/p}
\le \left( \sum_\node \left(
|x_\node | \ \| A\|^*_q \ \|y\|_p 
\right)^p
\right)^{1/p}\\
=\|x\|_p\ \|A\|^*_q \  \|y\|_p.
\end{multline*}
In case $p=\infty$ we have that
\begin{multline*}
\|z\|_\infty
=\max_\node |z_\node |
\le \max_\node |x_\node |\ \| A\|^*_1 \ \|y\|_\infty =\\
\|x\|_\infty \ \|A\|^*_1 \  \|y\|_\infty.
\end{multline*}

In order to prove the other direction first notice that it is well known that Holder's inequality is tight in the sense that, for any fixed complex vector $a$ there exists a complex vector $b$ with the same dimension such that $\|b\|_p=1$ and $|\sum_k a_k b_k|=\|a\|_q$. From this fact, if $h'$ is the index such that $\| A\|^*_q=\| A_{h'\bullet}\|_q$, we let $x=e_{h'}$ (namely the $h'$-th vector of the canonical base) and $y$ be a vector such that $\|y\|_p=1$ and
$|\sum_k A_{h' k} y_k |=\| A_{h'\bullet}\|^*_q$. With this choice it is easy to verify that
$$\|\diag(x)Ay\|_p=\| A\|^*_q$$
\end{proof}

\begin{lemma}
Assume that we have a vector norm $\|\cdot\|$ and a matrix norm $\|\cdot\|^*$ such that $\|\diag(x) A y\| \le \|A\|^*\|x\|\|y\|$ for any $x$, $y$.
If we define the function
$$
F(x):=\diag(x+a)A(\bar x+\bar a)
$$
then
$$
\|F(x_1)-F(x_2)\| \le \|A\|^* \big(\|x_1+x_2\|+2\|a\|\big)
\|x_1-x_2\|.
$$
\label{lem:F}
\end{lemma}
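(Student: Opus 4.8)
The plan is to exploit the fact that $F$ is bilinear in $x+a$ and its complex conjugate, so that the difference $F(x_1)-F(x_2)$ factors through $x_1-x_2$; the only subtlety is to carry out this factorization \emph{symmetrically}, so that the norm of the \emph{sum} $x_1+x_2$ appears rather than $\|x_1\|+\|x_2\|$. First I would set $u_i:=x_i+a$, so that $F(x_i)=\diag(u_i)\,A\,\bar u_i$, and introduce the sum $\sigma:=u_1+u_2=x_1+x_2+2a$ and the difference $\delta:=u_1-u_2=x_1-x_2$, so that $u_1=\tfrac12(\sigma+\delta)$ and $u_2=\tfrac12(\sigma-\delta)$.

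Substituting these expressions into $\diag(u_i)\,A\,\bar u_i$ and expanding by bilinearity, one obtains $4\diag(u_i)A\bar u_i=\diag(\sigma)A\bar\sigma\pm\diag(\sigma)A\bar\delta\pm\diag(\delta)A\bar\sigma+\diag(\delta)A\bar\delta$ (with the upper signs for $i=1$ and the lower ones for $i=2$), so that the two ``even'' pieces $\diag(\sigma)A\bar\sigma$ and $\diag(\delta)A\bar\delta$ cancel upon subtraction, leaving the clean identity
\[
F(x_1)-F(x_2)=\tfrac12\big(\diag(\sigma)\,A\,\bar\delta+\diag(\delta)\,A\,\bar\sigma\big).
\]
Verifying this identity by a direct expansion is the only genuine computation in the proof.

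From here the estimate is routine: the triangle inequality together with the hypothesis $\|\diag(x)Ay\|\le\|A\|^*\|x\|\|y\|$, applied to each of the two terms, gives $\|F(x_1)-F(x_2)\|\le\tfrac12\|A\|^*\big(\|\sigma\|\,\|\bar\delta\|+\|\delta\|\,\|\bar\sigma\|\big)$; using that the vector norm satisfies $\|\bar z\|=\|z\|$ (as every $p$-norm does, and it is with $p$-norms that the lemma is applied) this collapses to $\|A\|^*\|\sigma\|\,\|\delta\|$, and one last triangle inequality $\|\sigma\|=\|x_1+x_2+2a\|\le\|x_1+x_2\|+2\|a\|$ yields the stated bound.

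I expect the main obstacle to be precisely the decision to split symmetrically: the more obvious telescoping $F(x_1)-F(x_2)=\diag(x_1-x_2)\,A\,\overline{(x_1+a)}+\diag(x_2+a)\,A\,\overline{(x_1-x_2)}$ is correct but only produces the factor $\|x_1+a\|+\|x_2+a\|$, which in general strictly exceeds $\|x_1+x_2\|+2\|a\|$ (take $x_1=-x_2$ and $a=0$); it is exactly the cancellation of the even terms above that upgrades $\|x_1\|+\|x_2\|$ to $\|x_1+x_2\|$.
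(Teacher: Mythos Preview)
Your proof is correct and rests on the same polarization identity as the paper's. The only difference is organizational: the paper first splits off the cross-terms in $a$, bounding $\|\diag(x_1-x_2)A\bar a\|$ and $\|\diag(a)A(\bar x_1-\bar x_2)\|$ separately, and then applies the polarization identity only to the residual piece $\diag(x_1)A\bar x_1-\diag(x_2)A\bar x_2$; you instead absorb $a$ into $u_i=x_i+a$ and polarize once, which is a little cleaner and avoids handling three terms. Both routes rely implicitly on $\|\bar z\|=\|z\|$, which holds for the $p$-norms in play.
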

\begin{proof}
First observe that
\begin{align*}
\|F(x_1)-F(x_2)\|
& = \|\diag(x_1)A\bar x_1+\diag(x_1)A\bar a\\
& \quad + \diag(a)A\bar x_1-\diag(x_2)A\bar x_2\\
& \quad -\diag( x_2)A\bar a-\diag( a)A\bar x_2\|\\
& \le \|\diag( x_1)A\bar x_1-\diag( x_2)A\bar x_2\|\\
& \quad +\|\diag( x_1- x_2)A\bar a\|\\
& \quad +\|\diag( a)A(\bar x_1-\bar x_2)\|.
\end{align*}
Notice that
\begin{multline*}
\diag( x_1)A\bar x_1-\diag( x_2)A\bar x_2 \\
=\frac{1}{2}(\diag( x_1- x_2)A(\bar x_1+\bar  x_2) + \diag( x_1+ x_2)A(\bar x_1- \bar x_2))
\end{multline*}
and so
$$
\|\diag( x_1)A\bar x_1-\diag( x_2)A\bar x_2\|\le \|A\|^* \|x_1-x_2\| \|x_1+x_2\|.
$$
From this we can argue that
\begin{align*}
\|F(x_1)-F(x_2)\|&\le \|A\|^* \|x_1-x_2\| \|x_1+x_2\|\\
& \quad + 2\|A\|^* \|a\| \|x_1-x_2\|\\
& = \|A\|^*(\|x_1+x_2\|+2\|a\|)\|x_1-x_2\|.
\end{align*}
\end{proof}

\begin{lemma}\label{lemma}
Let $x,a,b\ge 0$ such that $ab\le 1/4$. Then $x=4ab^2$
satisfies 
\begin{equation}\label{second}
a(x+b)^2\le x
\end{equation} 
\end{lemma}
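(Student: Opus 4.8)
The statement is an elementary algebraic inequality, so the plan is a direct substitution followed by a short factorization. First I would dispose of the degenerate cases: if $a=0$ then $x=0$ and both sides of \eqref{second} vanish; if $b=0$ then again $x=0$ and \eqref{second} reads $0\le 0$. So from now on assume $a>0$ and $b>0$.

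Next, substitute $x=4ab^2$ into the left-hand side of \eqref{second}. I would compute
$$
a(x+b)^2 = a\,(4ab^2+b)^2 = a\,b^2\,(4ab+1)^2,
$$
using $4ab^2+b = b(4ab+1)$. The target inequality $a(x+b)^2\le x$ then becomes $ab^2(4ab+1)^2 \le 4ab^2$, and since $ab^2>0$ we may divide through to reduce the claim to
$$
(4ab+1)^2 \le 4.
$$

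Finally I would invoke the hypothesis $ab\le 1/4$: since $a,b\ge 0$ we have $0\le 4ab\le 1$, hence $1\le 4ab+1\le 2$, and squaring (both sides nonnegative) gives $(4ab+1)^2\le 4$, which is exactly what was needed. Reversing the steps yields \eqref{second}.

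There is essentially no obstacle here; the only point requiring a little care is keeping track of the degenerate cases $a=0$ and $b=0$, where one cannot divide by $ab^2$, so these must be handled first (as above) before carrying out the cancellation in the generic case.
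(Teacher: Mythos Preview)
Your proof is correct and follows essentially the same route as the paper: substitute $x=4ab^2$, factor $4ab^2+b=b(4ab+1)$, and reduce to $(4ab+1)^2\le 4$, which follows from $0\le 4ab\le 1$. The only difference is that you explicitly treat the degenerate cases $a=0$ or $b=0$ before dividing by $ab^2$, whereas the paper tacitly performs that cancellation; your version is slightly more careful in that respect.
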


\begin{proof}
To prove the lemma it is enough to substitute $x=4ab^2$ in (\ref{second}) and to verify that the inequality holds. Doing this we need to verify that 
$$4ab^2\ge a(4ab^2+b)^2=ab^2(4ab+1)^2$$
This inequality holds if and only if $4\ge (4ab+1)^2$, which is true since by hypothesis we have that
$0\le 4ab\le 1$.
\end{proof}

\section*{Acknowledgment}
\addcontentsline{toc}{section}{Acknowledgment}

The authors thank Prof. Florian D\"orfler for his insightful and useful comments on the first draft of the paper.



\end{document}